\documentclass[11pt]{article}

\usepackage[margin=1in]{geometry}
\usepackage{amsmath,amssymb,amsthm}
\usepackage{microtype}
\usepackage[colorlinks=true,linkcolor=blue,citecolor=blue,urlcolor=blue]{hyperref}

\newtheorem{theorem}{Theorem}[section]
\newtheorem{corollary}[theorem]{Corollary}
\newtheorem{lemma}[theorem]{Lemma}
\newtheorem{construction}[theorem]{Construction}

\theoremstyle{remark}
\newtheorem{remark}[theorem]{Remark}

\DeclareMathOperator{\Tr}{Tr}
\DeclareMathOperator{\N}{N}
\newcommand{\F}{\mathbf{F}}

\title{The Three-Dimensional Erd\H{o}s Box Problem Has Exponent \texorpdfstring{$11/4$}{11/4}}
\author{Dean Menezes}
\date{}

\begin{document}

\maketitle

\begin{abstract}
The Zarankiewicz problem for $3$-uniform hypergraphs asks for the maximum number $z(n)$ of edges in a tripartite hypergraph with $n$ vertices in each part containing no copy of $K_{2,2,2}^{(3)}$ (a ``box''). Erd\H{o}s (1964) proved $z(n) = O(n^{11/4})$. The best previously known lower bound was $\Omega(n^{8/3})$, due to Katz, Krop, and Maggioni (2002). We construct a family of box-free hypergraphs matching Erd\H{o}s's upper bound: for each $q = 2^m$ ($m \geq 1$), our hypergraph has $q^4$ vertices in each part and $q^{11}$ edges, establishing that $z(n) = \Theta(n^{11/4})$. The construction is algebraic, defined over $\F_{q^3}$ via the power map $\tau(s) = s^{q^2-q+1}$. The proof shows that the direction-$1$ finite differences of $\tau$ partition $\F_{q^3}$ into pairwise skew affine lines over $\F_q$, preventing boxes from forming.
\end{abstract}

\section{Introduction}

In 1964, Erd\H{o}s~\cite{erdos} introduced the Zarankiewicz problem for uniform hypergraphs. A tripartite $3$-uniform hypergraph $H \subseteq X \times Y \times Z$ contains a \emph{box} (a copy of $K_{2,2,2}^{(3)}$) if there are distinct vertices $x_0, x_1 \in X$, $y_0, y_1 \in Y$, and $z_0, z_1 \in Z$ such that all eight triples $(x_h, y_i, z_j)$ ($h, i, j \in \{0,1\}$) belong to $H$. Let $z(n)$ be the maximum number of edges in a box-free tripartite $3$-uniform hypergraph with $n$ vertices in each part:
\[
z(n) = \max \bigl\{ |H| : |X|=|Y|=|Z|=n, \, H \subseteq X \times Y \times Z \text{ contains no box} \bigr\}.
\]
This problem is the three-dimensional analogue of the Zarankiewicz problem for $C_4 = K_{2,2}$ in bipartite graphs, where K\H{o}v\'ari, S\'os, and Tur\'an~\cite{kst} proved the upper bound $O(n^{3/2})$, which Erd\H{o}s, R\'enyi, and S\'os~\cite{ers} showed to be sharp.

For $3$-uniform hypergraphs, Erd\H{o}s~\cite{erdos} proved $z(n) = O(n^{11/4})$. In 2002, Katz, Krop, and Maggioni~\cite{kkn} established the lower bound $z(n) = \Omega(n^{8/3})$, later reproved by Conlon, Pohoata, and Zakharov~\cite{cpz} using random multilinear forms, and by Gordeev~\cite{gordeev1,gordeev2} using the Combinatorial Nullstellensatz. Recently, Chen, Xu, and Ye~\cite{chenxu} proved that $8/3$ is an intrinsic barrier for the multilinear method: vanishing sets of multilinear forms have at most $O(n^{8/3})$ edges. For asymmetric boxes $K_{2,2,t}^{(3)}$ with large $t$, Bukh's random algebraic method~\cite{bukh} yields $K_{2,2,t}^{(3)}$-free triple systems with $\Theta(n^{11/4})$ edges when $t \geq 721$, and Mubayi~\cite{mubayi} reached $t = 7$ with an $n^{-o(1)}$ loss. For the symmetric box $K_{2,2,2}^{(3)}$, the best known lower bound remained $\Omega(n^{8/3})$~\cite{dashmaj}.

We construct an algebraic family of hypergraphs that matches Erd\H{o}s's upper bound.

\begin{theorem}\label{thm:main}
For each $q = 2^m$ with $m \geq 1$, there is a box-free tripartite $3$-uniform hypergraph with $q^4$ vertices in each part and $q^{11}$ edges.
\end{theorem}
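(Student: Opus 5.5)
The plan is to build $H$ as the zero set of a single $\F_q$-valued equation on $(\F_{q^3}\times\F_q)^3$, which has the right size automatically, and to reduce box-freeness to a structural lemma about the power map $\tau(s)=s^{q^2-q+1}$.

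\emph{Construction and edge count.} Take $X=Y=Z=\F_{q^3}\times\F_q$, so each part has $q^4$ vertices. Write $x=(a,\alpha)$, $y=(b,\beta)$, $z=(c,\gamma)$. I would declare $(x,y,z)\in H$ iff
\[
\alpha+\beta+\gamma=\Phi(a,b,c),
\]
where $\Phi$ is an $\F_q$-valued function built from $\Tr$ and $\tau$ evaluated at a combination of $a,b,c$. A natural first try is $\Phi=\Tr\bigl(\lambda\,\tau(a+b+c)\bigr)$, possibly with cross terms such as $\Tr(ab^{q})$ to break the symmetry under translation. For every choice of $a,b,c,\alpha,\beta$ the equation determines $\gamma$ uniquely. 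Hence $|H|=q^{3\cdot 3}\cdot q^{2}=q^{11}$, so the edge count is immediate and all the work lies in excluding boxes.

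\emph{Reducing a box to finite differences.} Suppose $x_0,x_1,y_0,y_1,z_0,z_1$ span a box. Add the eight defining equations with signs $(-1)^{h+i+j}$; in characteristic $2$ all signs are $+$. The linear $\alpha,\beta,\gamma$ terms cancel, leaving the mixed third difference
\[
\sum_{h,i,j}\Phi(a_h,b_i,c_j)=0.
\]
Lower-order combinations, namely fixing one index and summing over the other two, give second-difference relations involving the differences $\alpha_0+\alpha_1$, $\beta_0+\beta_1$, $\gamma_0+\gamma_1$.

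Next I would normalize using the affine symmetries of the construction. Scaling by $\F_{q^3}^*$ acts compatibly on $\tau$ because $\tau$ is a power map, and translations act as well. With these I can arrange a difference such as $a_1-a_0$ to equal $1$. The box relations then become statements about the direction-$1$ difference
\[
D(s)=\tau(s+1)+\tau(s)
\]
at several points $s$. Two cases need separate handling. If $a_0=a_1$ (and similarly for $b$ or $c$), the distinctness of the vertices forces $\alpha_0\ne\alpha_1$, and the single-equation structure should give a direct contradiction. Otherwise the genuine case remains.

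\emph{Key lemma (main obstacle).} The core step is to prove that the map $s\mapsto D(s)$ has fibers that are affine $\F_q$-lines $s_0+\F_q v$, and that distinct fibers are pairwise skew. Concretely, I would argue as follows.
\begin{itemize}
\item For $s\ne0$, write $\tau(s)=s^{1+q^2}/s^{q}$.
\item Expand $D(s)$ as a rational function in $s,s^q,s^{q^2}$.
\item Use the norm $\N(s)=s^{1+q+q^2}$ and Frobenius conjugates to show that $D(s)=D(s')$ forces $s'-s$ to lie in a one-dimensional $\F_q$-space depending on $s$.
\item Count: there are $q^2$ lines of size $q$, which exhausts $\F_{q^3}$.
\end{itemize}
Skewness means no two of these lines lie in a common affine plane; I would prove it by showing the direction vectors $v(s)$ are pairwise non-parallel once the offsets are accounted for.

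With the lemma in hand, the genuine box case yields four points that must satisfy two incompatible conditions. They pair into two fibers of $D$ lying in a common $\F_q$-plane, or equivalently two distinct fibers that meet, which contradicts skewness. I expect the lemma, and choosing $\Phi$ so that the box relations land exactly on it, to be the hardest part. I would first verify the lemma numerically for $q=2,4$ to pin down the correct $\Phi$ before writing the general argument.
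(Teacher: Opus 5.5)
Your edge count and the case $a_0=a_1$ are fine, but the proposal never fixes the hypergraph, and the one concrete candidate contains boxes. Suppose $\Phi(a,b,c)$ depends only on $a+b+c$, as $\Tr(\lambda\tau(a+b+c))$ does. Pick any $a_0,b_0,c_0$ and any $\epsilon\neq 0$, and set $a_1=a_0+\epsilon$, $b_1=b_0+\epsilon$, $c_1=c_0+\epsilon$. In characteristic $2$, $a_h+b_i+c_j$ equals $S=a_0+b_0+c_0$ or $S+\epsilon$ according to the parity of $h+i+j$. Put $\theta=\Phi(S)+\Phi(S+\epsilon)$; then the choice $\alpha_h=\Phi(S)+h\theta$, $\beta_i=i\theta$, $\gamma_j=j\theta$ makes all eight triples edges.

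The missing idea is the one that makes skew lines relevant at all. The paper's edge condition $\Tr(\tau(Y+Z)W)+\omega=E(Y,Z)+\eta+\zeta$ is \emph{linear} in the $\mathcal{X}$-coordinate $W$. So two vertices $x_0\neq x_1$ with common neighbours $(y_i,z_j)$ force the four points $A_{ij}=\tau(Y_i+Z_j)$ onto the affine plane $\Tr\bigl(B(W_0+W_1)\bigr)=\omega_0+\omega_1$. When $\tau$ is applied to $a+b+c$, no such plane arises.

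Your key lemma is also false as stated. Because $D(s+1)=D(s)$, every fiber of $D$ is invariant under $s\mapsto s+1$. If the fibers were affine $\F_q$-lines, they would all be parallel to $\F_q\cdot 1$, so any two would lie in a common plane. In fact, for $q\geq 4$ every fiber other than $D^{-1}(1)$ has the form $\{s_0+r+\rho(r+r^2):r\in\F_q\}$ with $\rho^2=c^{q+1}\notin\F_q$, which is not a line. The true statement (Theorem~\ref{thm:fd_lines}) concerns the $\tau$-images. From $\tau(s)^{q+1}=s^2$ and $(s+1)^2=s^2+1$ one gets $\tau(D^{-1}(c))=c^2/\N(c)+\F_q c$. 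These lines are pairwise skew because $\Tr(c)=\N(c)$ and $\Tr(\lambda c)=\N(\lambda c)$ with $\lambda\in\F_q^\times$ force $\lambda^2=1$.

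Even correct skewness does not finish the argument. It only forces both pairs $\{A_{00},A_{10}\}$ and $\{A_{01},A_{11}\}$ onto a single line $a^dL_c$, and that configuration genuinely occurs: for example, $Z_0+Z_1=Y_0+Y_1$ gives $A_{00}=A_{11}$ and $A_{01}=A_{10}$. Without a correction term, the hypergraph $\Tr(\tau(Y+Z)W)+\omega=\eta+\zeta$ contains boxes of exactly this type. The paper eliminates this case with the offset $E(Y,Z)=\N(Y)+\Tr\bigl(Y^{2q}\tau(Y+Z)\bigr)$. This offset is $\N$-homogeneous and satisfies $F_u(s)+F_u(t)=\Tr(\tau(s)+\tau(t))$ on fibers of $D$, which gives $\sum_{i,j}E(Y_i,Z_j)=\lambda\N(a)\N(c)\neq 0$. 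On the other hand, summing the four edge equations at $x_0$ forces that same sum to equal $\Tr\bigl((\sum_{i,j}A_{ij})W_0\bigr)=0$. Your plan has no ingredient playing this role, so deferring the choice of $\Phi$ to numerical experiments leaves the actual proof undone.
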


\begin{corollary}\label{cor:main}
The maximum number of edges in a box-free tripartite $3$-uniform hypergraph satisfies
\[
z(n) = \Theta(n^{11/4}) \qquad\text{as } n \to \infty.
\]
\end{corollary}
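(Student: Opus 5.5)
The corollary combines Theorem~\ref{thm:main}, which supplies the lower bound along the sparse sequence $n = 2^{4m}$, with Erd\H{o}s's upper bound $z(n) = O(n^{11/4})$ from~\cite{erdos}, which I take as given. So the only work is to extend the lower bound from that sequence to all $n$, using monotonicity in $n$.

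First I will record that $z$ is nondecreasing in $n$. Take a box-free $H \subseteq X \times Y \times Z$ with $|X|=|Y|=|Z|=n$. Add one new vertex to each part and no new edges. The result is still box-free, because a box uses only vertices lying in edges. It has the same number of edges, so $z(n+1) \geq z(n)$.

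Next, for a given $n \geq 16$, let $m \geq 1$ be the largest integer with $q = 2^m$ satisfying $q^4 \leq n$. Maximality gives $(2q)^4 > n$, so $q^4 > n/16$. Theorem~\ref{thm:main} gives a box-free hypergraph with $q^4$ vertices per part and $q^{11}$ edges. By monotonicity,
\[
z(n) \;\geq\; z(q^4) \;\geq\; q^{11} \;=\; (q^4)^{11/4} \;>\; (n/16)^{11/4} \;=\; 2^{-11}\, n^{11/4}.
\]

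For $1 \leq n < 16$, any single triple is box-free, so $z(n) \geq 1$. This is at least a fixed positive constant times $n^{11/4}$ on this finite range. Hence $z(n) = \Omega(n^{11/4})$ for all $n \geq 1$, and together with Erd\H{o}s's bound $z(n) = \Theta(n^{11/4})$. There is no real obstacle here: all the difficulty sits in Theorem~\ref{thm:main}, and the only point needing care is that the dyadic gaps between admissible $q$ cost only the constant factor $16^{11/4} = 2^{11}$.
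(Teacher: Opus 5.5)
Your proof is correct and follows the paper's argument. Both take the largest $q=2^m$ with $q^4\le n$, pad with isolated vertices (your monotonicity step), and combine the result with Erd\H{o}s's upper bound; your explicit constant $2^{-11}$ and separate treatment of $n<16$ only make precise what the paper leaves implicit.
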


\begin{proof}
The upper bound $z(n) = O(n^{11/4})$ is due to Erd\H{o}s~\cite{erdos}. For the lower bound, let $q = 2^m$ be the largest power of $2$ such that $q^4 \leq n$. Padding each part of the hypergraph from Theorem~\ref{thm:main} with isolated vertices yields a box-free tripartite hypergraph with $n$ vertices in each part and $q^{11} = \Omega(n^{11/4})$ edges.
\end{proof}

\subsection{Overview of the construction}

Let $k = \F_q$ and $K = \F_{q^3}$ with $q = 2^m$. The vertex sets $\mathcal{X}, \mathcal{Y}, \mathcal{Z}$ are copies of $K \times k$, each having $q^4$ vertices. A triple $x = (W,\omega) \in \mathcal{X}$, $y = (Y,\eta) \in \mathcal{Y}$, $z = (Z,\zeta) \in \mathcal{Z}$ forms an edge of $H_q$ if and only if
\[
\Tr\bigl(\tau(Y+Z)W\bigr) + \omega = E(Y,Z) + \eta + \zeta,
\]
where $\Tr \colon K \to k$ is the field trace, $\tau(s) = s^{q^2-q+1}$, and $E \colon K \times K \to k$ is an offset function. Given $y, z$, and $W$, this condition uniquely determines $\omega \in k$, yielding $q^4 \cdot q^4 \cdot q^3 = q^{11}$ edges.

Suppose eight vertices $x_h = (W_h, \omega_h) \in \mathcal{X}$, $y_i = (Y_i, \eta_i) \in \mathcal{Y}$, $z_j = (Z_j, \zeta_j) \in \mathcal{Z}$ ($h, i, j \in \{0,1\}$) form a box in $H_q$. Subtracting the edge equation for $(x_1, y_i, z_j)$ from that for $(x_0, y_i, z_j)$ cancels $E(Y_i, Z_j)$, $\eta_i$, and $\zeta_j$, leaving
\[
\Tr\bigl(\tau(Y_i + Z_j)(W_0 + W_1)\bigr) = \omega_0 + \omega_1.
\]
Since $W_0 \neq W_1$, the non-zero linear functional $B \mapsto \Tr(B(W_0+W_1))$ forces the four points $A_{ij} = \tau(Y_i + Z_j)$ to lie in an affine plane in $K$.

The fibers of the direction-$1$ finite difference $\delta(s) = \tau(s+1) + \tau(s)$ map under $\tau$ to affine lines $L_c \subset K$. These lines are \emph{pairwise skew}: no affine plane contains two of them. After rescaling by $a = Y_0 + Y_1$, the pairs $\{A_{00}, A_{10}\}$ and $\{A_{01}, A_{11}\}$ lie on two lines of this family. Skewness forces the two lines to coincide.

The offset function $E$ yields the contradiction: coplanarity forces $\sum_{i,j} A_{ij} = 0$, while the offset sum $\Delta E = \sum_{i,j} E(Y_i, Z_j)$ is non-zero. Summing the edge equations for $x = x_0$ over $i, j \in \{0,1\}$ gives $0 = \Delta E \neq 0$.

\subsection{Organization of the paper}

Section~\ref{sec:lines} establishes the skew-line partition for the finite differences of $\tau$ (Theorem~\ref{thm:fd_lines}). Section~\ref{sec:construction} defines the offset function $E$, constructs $H_q$, and proves Theorem~\ref{thm:main}. Section~\ref{sec:consequences} discusses connections to asymmetric boxes, the octahedron problem for graphs, and several open problems.

\section{The finite-difference lines}\label{sec:lines}

Throughout, let $q = 2^m$ with $m \geq 1$, $k = \F_q$, $K = \F_{q^3}$, and $d = q^2 - q + 1$, viewing $K$ as a three-dimensional vector space over $k$.
The trace and norm from $K$ to $k$ are $\Tr(x) = x + x^q + x^{q^2}$ and $\N(x) = x^{q^2+q+1}$.

Define $\tau \colon K \to K$ by $\tau(s) = s^d$. Since $d(q+1) = q^3 + 1$ and $q^3 - 1$ is odd, $\gcd(d, q^3 - 1) = \gcd(q+1, q^3 - 1) = 1$, so $\tau$ and $s \mapsto s^{q+1}$ are permutations of $K$, and
\begin{equation}\label{eq:tau_power}
\tau(s)^{q+1} = s^{q^3+1} = s^2.
\end{equation}

The direction-$1$ finite difference $\delta(s) = \tau(s+1) + \tau(s)$ satisfies $\delta(s+1) = \delta(s)$ and $\tau(s+a) + \tau(s) = a^d \delta(s/a)$ for $a \in K^\times$.

Define $\mathcal{C} = \{ c \in K^\times : \Tr(c) = \N(c) \}$, and for each $c \in \mathcal{C}$, let
\[
L_c = \frac{c^2}{\N(c)} + c k.
\]

\begin{theorem}[Finite-difference lines]\label{thm:fd_lines}
The image of $\delta$ is $\mathcal{C}$. For each $c \in \mathcal{C}$,
\[
\tau\bigl(\delta^{-1}(c)\bigr) = L_c.
\]
The $q^2$ lines $L_c$ partition $K$ and are pairwise skew: no affine plane contains two of them.
\end{theorem}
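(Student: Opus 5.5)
We will prove Theorem~\ref{thm:fd_lines} by writing the fibres of $\delta$ in the coordinate $t=\tau(s)$. By \eqref{eq:tau_power}, $s^2=t^{q+1}$, and $s$ is recovered from $t$ because squaring is a bijection in characteristic $2$. Fix $c\in K$ and put $u=t+c$. Then $\delta(s)=c$ means exactly $\tau(s+1)=u$. Since $\tau$ is a bijection and squaring is injective, this is equivalent to $(s+1)^2=u^{q+1}$, that is, $t^{q+1}+1=(t+c)^{q+1}$. Expanding in characteristic $2$ turns this into
\[
c\,t^q+c^q\,t \;=\; 1+c^{q+1}.
\]
This equation is $k$-affine in $t$. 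Since the derivation is reversible, $\tau(\delta^{-1}(c))$ is precisely its solution set.

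The linear part $\phi_c(t)=ct^q+c^qt$ has kernel described as follows. For $c\neq 0$ and $t\neq 0$, $\phi_c(t)=0$ iff $(t/c)^{q-1}=1$, iff $t/c\in k^\times$. Hence $\ker\phi_c=ck$, and each nonempty fibre is an affine $k$-line with direction $c$. The value $c=0$ never occurs, since $\delta(s)=0$ would give $\tau(s+1)=\tau(s)$.

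Next we show that the fibre over $c$ is nonempty exactly when $c\in\mathcal C$, and that it then contains $c^2/\N(c)$. Substituting $t_0=c^2/\N(c)$ into $\phi_c$ and using $\N(c)^q=\N(c)$ gives $\phi_c(t_0)=(c^{2q+1}+c^{q+2})/\N(c)$. Multiplying by $\N(c)$, we must compare $c^{2q+1}+c^{q+2}$ with $\N(c)+c^{q+1}\N(c)$. We expect this comparison to reduce to $c^{q+1}(\Tr(c)+\N(c))=0$, after writing $\N(c)=c^{1+q+q^2}$ and $\Tr(c)=c+c^q+c^{q^2}$. This is a routine check, and it shows $\mathcal C\subseteq\operatorname{im}\delta$ with fibre image $L_c$.

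For the converse we count. The fibres of $\delta$ over $\operatorname{im}\delta$ partition $K$, and their $\tau$-images are lines of size $q$. Hence $|\operatorname{im}\delta|=q^2$, and the images $\tau(\delta^{-1}(c))$ partition $K$ because $\tau$ is a bijection. It remains to show $|\mathcal C|\le q^2$, or directly that solvability forces $\Tr(c)=\N(c)$. For the latter, the image of $\phi_c$ is a $2$-dimensional $k$-subspace, and it is the kernel of $v\mapsto\Tr(\lambda v)$ for a suitable $\lambda$ depending on $c$; we expect $\lambda$ to be a power or rational expression in $c$, for instance $\lambda=1/\N(c)$ up to scaling. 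Then $1+c^{q+1}\in\operatorname{im}\phi_c$ should translate into $\Tr(c)=\N(c)$. Either route finishes the first two claims.

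For skewness, take distinct $c,c'\in\mathcal C$. First, the directions are $k$-independent. Indeed, if $c'=\lambda c$ with $\lambda\in k^\times$, then $\Tr(c')=\lambda\Tr(c)$ and $\N(c')=\lambda^3\N(c)$. Since $\Tr(c)=\N(c)\neq 0$, this forces $\lambda^2=1$, so $\lambda=1$ in characteristic $2$. Consequently the only affine plane that could contain both $L_c$ and $L_{c'}$ is a translate of $\mathrm{span}_k(c,c')$. Its normal for the trace form is
\[
w=c^q c'^{q^2}+c^{q^2}c'^q,
\]
since each term of $\Tr(wc)$ and of $\Tr(wc')$ appears twice. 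Thus the two lines are coplanar iff
\[
\Tr\!\left(w\left(\frac{c^2}{\N(c)}+\frac{c'^2}{\N(c')}\right)\right)=0 .
\]

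The main obstacle is showing that this trace is nonzero for all distinct $c,c'\in\mathcal C$. We would first expand $\Tr(w\,c^2/\N(c))$. Using $\N(c)=c^{1+q+q^2}$, the $c$-exponents reduce to a few cyclic terms, and we expect $\Tr(c)=\N(c)$ to collapse this expression to something like $\Tr(c'\cdot\text{(simple function of }c))$. Symmetrizing in $c$ and $c'$ should then produce a norm-type expression such as $\N(c+c')$, or a product that vanishes only when $c=c'$. If the direct computation becomes unwieldy, we would use an alternative. A common plane through both lines would contain a point $t\in L_c$ with $t+\mu c'\in L_{c'}$ for some $\mu\in k$. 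We would then eliminate using the two affine equations $\phi_c(t)=1+c^{q+1}$ and $\phi_{c'}(t+\mu c')=1+c'^{q+1}$, and derive $c=c'$.
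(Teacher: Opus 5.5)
Your reduction to the affine equation $c\,t^q+c^q t=1+c^{q+1}$ for $t=\tau(s)$ is correct, as are the computation $\ker\phi_c=ck$ and the check that $c^2/\N(c)$ is a solution (it does reduce to $c^{q+1}(\Tr(c)+\N(c))=0$). These agree with the paper up to the substitution $v=t/c$.

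However, you do not establish the inclusion $\operatorname{im}\delta\subseteq\mathcal C$. It is needed for the first claim, for the count of $q^2$ lines, and for the lines $L_c$ to cover $K$. Your counting route asks for $|\mathcal C|\le q^2$, but that is automatic from $\mathcal C\subseteq\operatorname{im}\delta$ and $|\operatorname{im}\delta|=q^2$. What you actually need is $|\mathcal C|\ge q^2$. Your direct route is left as a guess, and the guessed functional is wrong. Since $\phi_c(cv)=c^{q+1}(v^q+v)$, one has $\operatorname{im}\phi_c=c^{q+1}\ker\Tr$. So the right functional is $x\mapsto\Tr(c^{-(q+1)}x)$, which is not a $k$-multiple of $\Tr$. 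Solvability then becomes $\Tr(1+c^{-(q+1)})=0$, i.e.\ $\Tr(c)=\N(c)$ via $c^{-(q+1)}=c^{q^2}/\N(c)$; this is the paper's argument.

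Your counting route can also be repaired. On $k^\times c$ with $\Tr(c)\neq 0$, the condition $\Tr(\lambda c)=\N(\lambda c)$ reads $\lambda^2=\Tr(c)/\N(c)$. This has exactly one solution because squaring is bijective on $k$. A set $k^\times c$ with $\Tr(c)=0$ misses $\mathcal C$ entirely. Hence $|\mathcal C|=(q^2+q+1)-(q+1)=q^2$.

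For skewness, you correctly show that distinct $L_c,L_{c'}$ are not parallel. But you then call non-coplanarity the ``main obstacle'' and offer only expectations about a trace computation, so this part is unproven. The missing idea is elementary: two lines in a common affine plane either meet or are parallel. Here $L_c\cap L_{c'}=\emptyset$, since they are $\tau$-images of distinct fibres of $\delta$. Non-parallel plus disjoint therefore rules out coplanarity immediately, which is exactly how the paper concludes. Your ``alternative'' already contains this. Because $L_{c'}$ has direction $c'$, the condition $t+\mu c'\in L_{c'}$ just says $t\in L_{c'}$. So $t\in L_c\cap L_{c'}$, a contradiction, with no elimination required.
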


\begin{proof}
For $s \in K$, let $c = \delta(s) \neq 0$ and $U = \tau(s)$. By~\eqref{eq:tau_power},
\[
U^{q+1} = s^2 \qquad\text{and}\qquad (U+c)^{q+1} = (s+1)^2 = s^2 + 1.
\]
Subtracting these gives $c U^q + c^q U = 1 + c^{q+1}$. Dividing by $c^{q+1}$ with $v = U/c$ yields
\begin{equation}\label{eq:v_trace}
v^q + v = 1 + c^{-(q+1)}.
\end{equation}

The $k$-linear map $v \mapsto v^q + v$ on $K$ has kernel $k$, so by dimension counting its image is $\ker \Tr$. Thus~\eqref{eq:v_trace} has a solution $v \in K$ if and only if $\Tr\bigl(1 + c^{-(q+1)}\bigr) = 0$. Since $c^{-(q+1)} = c^{q^2}/\N(c)$ and $\Tr(c^{q^2}) = \Tr(c)$, this condition is $\Tr(c) = \N(c)$, so every value $c = \delta(s)$ lies in $\mathcal{C}$.

For $c \in \mathcal{C}$, the element $v_0 = c/\N(c)$ satisfies
\[
v_0^q + v_0 = \frac{c^q + c}{\N(c)} = \frac{\N(c) + c^{q^2}}{\N(c)} = 1 + c^{-(q+1)},
\]
since $\Tr(c) = \N(c)$. Thus the solutions to~\eqref{eq:v_trace} form the affine coset $v_0 + k$. Therefore, if $\delta(s) = c$, then $\tau(s) = cv \in c(v_0 + k) = c^2/\N(c) + ck = L_c$, proving $\tau(\delta^{-1}(c)) \subseteq L_c$.

Conversely, suppose $U \in L_c$. Then $v = U/c$ solves~\eqref{eq:v_trace}, so $(U+c)^{q+1} = U^{q+1} + 1$. Writing $U = \tau(s)$, we have $(U + c)^{q+1} = s^2 + 1 = \tau(s+1)^{q+1}$. Since $x \mapsto x^{q+1}$ is injective, $U+c = \tau(s+1)$, so $\delta(s) = c$. Thus $\tau(\delta^{-1}(c)) = L_c$. Since the fibers of $\delta$ partition $K$ and $\tau$ is a bijection, the $q^2$ lines $L_c$ partition $K$.

If two distinct lines $L_c, L_{c'}$ lie in an affine plane, they must be parallel since they are disjoint, so $c' = \lambda c$ for some $\lambda \in k^\times$. Since $\Tr = \N$ on $\mathcal{C}$,
\[
\lambda \N(c) = \Tr(\lambda c) = \Tr(c') = \N(c') = \lambda^3 \N(c).
\]
Since $\N(c) \neq 0$, this yields $\lambda^2 = 1$, whence $\lambda = 1$, a contradiction.
\end{proof}

\begin{remark}\label{rem:char2}
Characteristic~$2$ is essential to Theorem~\ref{thm:fd_lines}. Since $(s+1)^2 = s^2 + 1$, the quadratic term $s^2 = U^{q+1}$ cancels, making equation~\eqref{eq:v_trace} linear in $v$. In addition, $\lambda^2 = 1$ forces $\lambda = 1$, preventing parallel lines and ensuring skewness.
\end{remark}

\section{The construction}\label{sec:construction}

Define the offset function $E \colon K \times K \to k$ by
\[
E(Y,Z) = \N(Y) + \Tr\bigl(Y^{2q}\tau(Y+Z)\bigr).
\]

\begin{construction}\label{const:Hq}
Let $\mathcal{X}, \mathcal{Y}, \mathcal{Z}$ be disjoint copies of $K \times k$. A triple $x = (W,\omega) \in \mathcal{X}$, $y = (Y,\eta) \in \mathcal{Y}$, $z = (Z,\zeta) \in \mathcal{Z}$ forms an edge of $H_q$ if and only if
\begin{equation}\label{eq:edge_cond}
\Tr\bigl(\tau(Y+Z)W\bigr) + \omega = E(Y,Z) + \eta + \zeta.
\end{equation}
\end{construction}

Each part has $q^4$ vertices. Given $y \in \mathcal{Y}$, $z \in \mathcal{Z}$, and $W \in K$, equation~\eqref{eq:edge_cond} uniquely determines $\omega \in k$, so $H_q$ has $q^4 \cdot q^4 \cdot q^3 = q^{11}$ edges.

\begin{lemma}[Properties of the offset function]\label{lem:offset}
The offset function $E$ satisfies:
\begin{enumerate}
\item[\textup{(i)}] $E(aY, aZ) = \N(a) E(Y,Z)$ for all $a, Y, Z \in K$.
\item[\textup{(ii)}] For each $u \in K$, the function $F_u(v) = E(u, v+u) + E(u+1, v+u)$ satisfies
\[
F_u(s) + F_u(t) = \Tr\bigl(\tau(s) + \tau(t)\bigr)
\]
whenever $\delta(s) = \delta(t)$.
\end{enumerate}
\end{lemma}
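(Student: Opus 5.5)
Both parts will follow by direct substitution into the definition $E(Y,Z)=\N(Y)+\Tr\bigl(Y^{2q}\tau(Y+Z)\bigr)$. Part (i) uses only multiplicativity and the exponent identity $2q+d=q^2+q+1$. Part (ii) uses characteristic~$2$ to expand $(u+1)^{2q}$, and the definition of $\delta$. Theorem~\ref{thm:fd_lines} is not needed.

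For (i), I will treat the two summands separately. The norm is multiplicative, so $\N(aY)=\N(a)\N(Y)$. Since $\tau$ is a power map, $\tau(aY+aZ)=a^d\tau(Y+Z)$, and also $(aY)^{2q}=a^{2q}Y^{2q}$. Hence the argument of the trace picks up the factor
\[
a^{2q+d}=a^{2q+q^2-q+1}=a^{q^2+q+1}=\N(a)\in k.
\]
Because $\Tr$ is $k$-linear, this factor comes out of the trace, giving $E(aY,aZ)=\N(a)E(Y,Z)$. The case $a=0$ is trivial since both sides vanish.

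For (ii), I will first simplify $F_u(v)$ using characteristic~$2$. In $E(u,v+u)$ the argument of $\tau$ is $u+(v+u)=v$. In $E(u+1,v+u)$ it is $(u+1)+(v+u)=v+1$. Moreover $(u+1)^{2q}=u^{2q}+1$, since the Frobenius map and squaring are additive. Therefore
\[
F_u(v)=\N(u)+\N(u+1)+\Tr\bigl(u^{2q}\tau(v)\bigr)+\Tr\bigl(u^{2q}\tau(v+1)\bigr)+\Tr\bigl(\tau(v+1)\bigr).
\]
Combining the two middle traces and writing $\tau(v+1)=\tau(v)+\delta(v)$ gives the normal form
\[
F_u(v)=\bigl[\N(u)+\N(u+1)\bigr]+\Tr\bigl(u^{2q}\delta(v)\bigr)+\Tr\bigl(\delta(v)\bigr)+\Tr\bigl(\tau(v)\bigr).
\]

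Now suppose $\delta(s)=\delta(t)$ and add $F_u(s)$ and $F_u(t)$. The bracketed constant appears twice and cancels. The two terms $\Tr\bigl(u^{2q}\delta(\cdot)\bigr)$ are equal, as are the two terms $\Tr\bigl(\delta(\cdot)\bigr)$, so each pair cancels in characteristic~$2$. What remains is $\Tr\bigl(\tau(s)\bigr)+\Tr\bigl(\tau(t)\bigr)=\Tr\bigl(\tau(s)+\tau(t)\bigr)$, as claimed.

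There is no real obstacle. The only points needing care are the exponent bookkeeping $2q+d=q^2+q+1$ in (i), and in (ii) recognising that the choice of offset arguments $(u,v+u)$ and $(u+1,v+u)$ makes the $\tau$-arguments collapse to $v$ and $v+1$, which is what lets $\delta(v)$ appear.
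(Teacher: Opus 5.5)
Your proof is correct and follows the paper's argument essentially verbatim. For (i) you use $a^{2q}a^{d}=\N(a)$ together with the $k$-linearity of $\Tr$. For (ii) you write $F_u(v)=\Tr(\tau(v))+\N(u)+\N(u+1)+\Tr\bigl((u^{2q}+1)\delta(v)\bigr)$, whose last three terms depend only on $u$ and $\delta(v)$ and therefore cancel when $\delta(s)=\delta(t)$.
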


\begin{proof}
For~(i), since $2q + d = q^2 + q + 1$, we have $a^{2q} a^d = \N(a)$, so $(aY)^{2q} \tau(aY + aZ) = \N(a) Y^{2q} \tau(Y+Z)$. Since $\N$ is multiplicative and $\Tr$ is $k$-linear, $E(aY, aZ) = \N(a) E(Y,Z)$.

For~(ii), let $u, v \in K$, $U = \tau(v)$, and $c = \delta(v)$, so that $\tau(v+1) = U + c$. Since $(u+1)^{2q} = u^{2q} + 1$,
\begin{align*}
E(u, v+u) &= \N(u) + \Tr(u^{2q} U), \\
E(u+1, v+u) &= \N(u+1) + \Tr\bigl((u^{2q}+1)(U+c)\bigr).
\end{align*}
Adding these gives
\[
F_u(v) = \Tr(U) + \N(u) + \N(u+1) + \Tr\bigl((u^{2q}+1)c\bigr).
\]
If $\delta(s) = \delta(t)$, all terms depending only on $u$ and $c$ cancel, leaving $F_u(s) + F_u(t) = \Tr\bigl(\tau(s) + \tau(t)\bigr)$. \qedhere
\end{proof}

\begin{lemma}[Coplanar quadruples]\label{lem:coplanar}
Let $Y_0, Y_1, Z_0, Z_1 \in K$ satisfy $Y_0 \neq Y_1$ and $Z_0 \neq Z_1$. For $i, j \in \{0,1\}$, set
\[
A_{ij} = \tau(Y_i + Z_j) \qquad\text{and}\qquad \Delta E = \sum_{i,j \in \{0,1\}} E(Y_i, Z_j).
\]
If $A_{00}, A_{01}, A_{10}, A_{11}$ lie in an affine plane in $K$, then
\[
\sum_{i,j \in \{0,1\}} A_{ij} = 0 \qquad\text{and}\qquad \Delta E \neq 0.
\]
\end{lemma}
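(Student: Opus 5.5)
The plan is to rescale by $a = Y_0 + Y_1 \neq 0$ so that each column pair $\{A_{0j}, A_{1j}\}$ becomes a pair of points on one of the lines $L_c$ of Theorem~\ref{thm:fd_lines}. Skewness then forces both columns onto the same line. Both conclusions then reduce to $\Tr(\lambda c) = \lambda\N(c) \neq 0$ on $\mathcal{C}$.

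\textbf{Rescaling.} Put $u = Y_0/a$ and $s_j = (Y_0 + Z_j)/a$ for $j \in \{0,1\}$. Then $Y_1 + Z_j = a(s_j + 1)$, so
\[
A_{0j} = a^d \tau(s_j), \qquad A_{1j} = a^d \tau(s_j + 1).
\]
Multiplication by $a^{-d}$ is $k$-linear and invertible, so it maps affine planes to affine planes. Hence the four points $\tau(s_0), \tau(s_0+1), \tau(s_1), \tau(s_1+1)$ lie in an affine plane $P$.

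\textbf{Skewness step.} Let $c_j = \delta(s_j)$. Since $\delta(s_j + 1) = \delta(s_j)$, both $s_j$ and $s_j + 1$ lie in $\delta^{-1}(c_j)$. By Theorem~\ref{thm:fd_lines}, $\tau(s_j)$ and $\tau(s_j + 1)$ therefore both lie on $L_{c_j}$. They are distinct points because $\tau$ is injective. An affine line meeting the plane $P$ in two distinct points is contained in $P$, so $L_{c_0}, L_{c_1} \subseteq P$. Pairwise skewness then gives $c_0 = c_1 =: c$. This is the step I expect to need the most care, because one must check that no degenerate coincidence escapes it (for example $s_1 = s_0 + 1$, which happens when $Z_1 = Z_0 + a$). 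In that case the two columns already lie on the same line $L_{c_0}$, and the argument goes through unchanged.

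\textbf{Sum of the $A_{ij}$.} Since $\tau(s_j + 1) = \tau(s_j) + c$, we get
\[
\sum_{i,j} A_{ij} = a^d\bigl(2\tau(s_0) + 2\tau(s_1) + 2c\bigr) = 0
\]
in characteristic $2$.

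\textbf{Non-vanishing of $\Delta E$.} By Lemma~\ref{lem:offset}(i), $E(Y_i, Z_j) = \N(a)\,E(Y_i/a, Z_j/a)$. Here $Y_0/a = u$, $Y_1/a = u + 1$, and $Z_j/a = s_j + u$. Therefore
\[
\Delta E = \N(a)\bigl(F_u(s_0) + F_u(s_1)\bigr).
\]
Because $\delta(s_0) = \delta(s_1)$, Lemma~\ref{lem:offset}(ii) gives
\[
\Delta E = \N(a)\,\Tr\bigl(\tau(s_0) + \tau(s_1)\bigr).
\]
Both $\tau(s_0)$ and $\tau(s_1)$ lie on $L_c = c^2/\N(c) + ck$, so $\tau(s_0) + \tau(s_1) = \lambda c$ for some $\lambda \in k$. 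Moreover $\lambda \neq 0$, since $s_0 \neq s_1$ (as $Z_0 \neq Z_1$) and $\tau$ is injective. Since $c \in \mathcal{C}$,
\[
\Tr(\lambda c) = \lambda \Tr(c) = \lambda\N(c) \neq 0.
\]
Finally $\N(a) \neq 0$, so $\Delta E = \N(a)\,\lambda\,\N(c) \neq 0$.
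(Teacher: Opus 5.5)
Your proposal is correct and follows the paper's proof: rescale by $a = Y_0 + Y_1$, use skewness of the lines $L_c$ to force $\delta(s_0) = \delta(s_1)$, then apply homogeneity and part (ii) of the offset lemma together with $\Tr = \N$ on $\mathcal{C}$ to get $\Delta E = \lambda\,\N(a)\N(c) \neq 0$. The only differences are cosmetic: you write $s_0, s_1$ where the paper writes $s, s+\gamma$, you pull the plane back by $a^{-d}$ instead of pushing the lines forward to $a^d L_c$, and you explicitly check the harmless degenerate case $s_1 = s_0 + 1$.
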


\begin{proof}
Rescale by $a = Y_0 + Y_1 \neq 0$, and set
\[
u = \frac{Y_0}{a}, \qquad s = \frac{Y_0 + Z_0}{a}, \qquad \gamma = \frac{Z_0 + Z_1}{a} \neq 0.
\]
Then $Y_i = a(u+i)$ and $Z_j = a(s + u + j\gamma)$ for $i,j \in \{0,1\}$, so $Y_i + Z_j = a(s + i + j\gamma)$ and $A_{ij} = a^d \tau(s + i + j\gamma)$.

Let $c = \delta(s)$ and $c' = \delta(s+\gamma)$. Since $\delta(x+1) = \delta(x)$, the pair $\{A_{00}, A_{10}\}$ lies on the line $a^d L_c$ and $\{A_{01}, A_{11}\}$ lies on $a^d L_{c'}$. Since $\tau$ is injective and $a \neq 0$, the points in each pair are distinct, so any affine plane containing all four points contains both lines $a^d L_c$ and $a^d L_{c'}$. By Theorem~\ref{thm:fd_lines}, these lines are skew unless $c = c'$. Thus $c = c'$, so $\delta(s) = \delta(s+\gamma)$ and $\sum_{i,j \in \{0,1\}} A_{ij} = a^d (c + c) = 0$.

For $\Delta E$, homogeneity (Lemma~\ref{lem:offset}(i)) gives $\sum_i E(Y_i, Z_0) = \N(a) F_u(s)$ and $\sum_i E(Y_i, Z_1) = \N(a) F_u(s+\gamma)$. Since $\delta(s) = \delta(s+\gamma)$, Lemma~\ref{lem:offset}(ii) gives
\[
\Delta E = \N(a) \bigl(F_u(s) + F_u(s+\gamma)\bigr) = \N(a) \Tr\bigl(\tau(s) + \tau(s+\gamma)\bigr).
\]
Both $\tau(s)$ and $\tau(s+\gamma)$ lie on $L_c = c^2/\N(c) + ck$. Since $\tau$ is injective and $\gamma \neq 0$, they are distinct, so $\tau(s) + \tau(s+\gamma) = \lambda c$ for some $\lambda \in k^\times$. Since $\Tr(c) = \N(c)$,
\[
\Delta E = \N(a) \Tr(\lambda c) = \lambda \N(a)\N(c) \neq 0. \qedhere
\]
\end{proof}

\begin{proof}[Proof of Theorem~\ref{thm:main}]
Suppose $H_q$ contains a box on vertices $x_h = (W_h, \omega_h) \in \mathcal{X}$, $y_i = (Y_i, \eta_i) \in \mathcal{Y}$, and $z_j = (Z_j, \zeta_j) \in \mathcal{Z}$ ($h,i,j \in \{0,1\}$). The first coordinates in each part must be distinct: if $Y_0 = Y_1$, then $\eta_0 \neq \eta_1$, but the edge conditions for $(x_0, y_0, z_0)$ and $(x_0, y_1, z_0)$ give $\eta_0 = \eta_1$. Similarly, $Z_0 \neq Z_1$ and $W_0 \neq W_1$.

For each $i,j \in \{0,1\}$, adding the edge equations for $(x_0, y_i, z_j)$ and $(x_1, y_i, z_j)$ in characteristic~$2$ cancels $E(Y_i, Z_j)$, $\eta_i$, and $\zeta_j$, leaving
\begin{equation}\label{eq:plane_eq}
\Tr\bigl(A_{ij}(W_0 + W_1)\bigr) = \omega_0 + \omega_1,
\end{equation}
where $A_{ij} = \tau(Y_i + Z_j)$. Since $W_0 \neq W_1$, the linear functional $B \mapsto \Tr(B(W_0+W_1))$ is non-zero, so~\eqref{eq:plane_eq} defines an affine plane containing all four points $A_{ij}$. Lemma~\ref{lem:coplanar} then implies
\[
\sum_{i,j \in \{0,1\}} A_{ij} = 0 \qquad\text{and}\qquad \Delta E \neq 0.
\]

Summing the four edge equations for $x = x_0$ over all $i,j \in \{0,1\}$ gives
\[
\sum_{i,j \in \{0,1\}} \Bigl(\Tr(A_{ij} W_0) + \omega_0\Bigr) = \sum_{i,j \in \{0,1\}} \Bigl(E(Y_i, Z_j) + \eta_i + \zeta_j\Bigr).
\]
The left-hand side vanishes because $\sum A_{ij} = 0$, and the right-hand side reduces to $\Delta E$ since each $\eta_i$ and $\zeta_j$ appears twice, yielding $0 = \Delta E \neq 0$, a contradiction. \qedhere
\end{proof}

\section{Concluding remarks and open questions}\label{sec:consequences}

\subsection{Asymmetric boxes and random polynomials}

Let $z(n, F)$ denote the maximum number of edges in a tripartite $3$-uniform hypergraph with $n$ vertices in each part containing no copy of $F$. Since $K_{2,2,t}^{(3)}$ contains $K_{2,2,2}^{(3)}$ for any $t \geq 2$, every box-free hypergraph is free of $K_{2,2,t}^{(3)}$. Combined with Erd\H{o}s's upper bound $z(n, K_{2,2,t}^{(3)}) = O_t(n^{11/4})$~\cite{erdos}, Theorem~\ref{thm:main} yields the sharp exponent for all such asymmetric boxes.

\begin{corollary}\label{cor:larger_t}
For each fixed integer $t \geq 2$,
\[
z(n, K_{2,2,t}^{(3)}) = \Theta_t(n^{11/4}) \qquad\text{as } n \to \infty.
\]
\end{corollary}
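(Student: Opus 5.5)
The plan is to squeeze $z(n, K_{2,2,t}^{(3)})$ between a lower bound coming from the box-free construction and Erd\H{o}s's upper bound.

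For the lower bound, fix $t \geq 2$ and $n$. Choose $q = 2^m$ as the largest power of $2$ with $q^4 \leq n$, so that $q^4 > n/16$. Take the hypergraph $H_q$ of Theorem~\ref{thm:main}, and pad each part with $n - q^4$ isolated vertices, exactly as in the proof of Corollary~\ref{cor:main}. This gives a tripartite hypergraph with $n$ vertices per part and $q^{11} \geq (n/16)^{11/4}$ edges, which is $\Omega(n^{11/4})$ with an absolute constant.

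I then need to check that this padded hypergraph contains no copy of $K_{2,2,t}^{(3)}$. Suppose it did, with vertex classes of sizes $2, 2, t$ placed in the three parts in some order. Since $t \geq 2$, I can keep any two of the $t$ vertices in the large class. The eight triples among the resulting $2+2+2$ vertices are all edges, so they form a box. Isolated vertices lie in no edges, so this box lies in $H_q$, contradicting Theorem~\ref{thm:main}.

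For the upper bound, I invoke Erd\H{o}s's theorem $z(n, K_{2,2,t}^{(3)}) = O_t(n^{11/4})$ from~\cite{erdos}. Alternatively, I can rederive it via the standard iterated Kővári–Sós–Turán argument.
\begin{itemize}
\item For each pair $(y,z)$, let the link be the set of $x$ forming an edge with $(y,z)$.
\item A copy of $K_{2,2,t}^{(3)}$ corresponds to a pair $x_0 \neq x_1$ whose common-neighbourhood bipartite graph on $Y \times Z$ contains $K_{2,t}$.
\item By Kővári–Sós–Turán, each such graph then has $O_t(n^{3/2})$ edges.
\item Convexity (Jensen) over the $\binom{n}{2}$ pairs of $x$'s then gives $|H| = O_t(n^{11/4})$.
\end{itemize}

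Combining the two bounds gives $\Theta_t(n^{11/4})$. Since the upper bound is taken from the literature and the lower bound is just containment, there is essentially no obstacle. The only point needing care is that the classes of $K_{2,2,t}^{(3)}$ may sit in any of the three parts, and the subfamily argument above handles every placement.
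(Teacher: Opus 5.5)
Your proposal is correct and follows the paper's argument: the lower bound comes from Theorem~\ref{thm:main}, padded as in Corollary~\ref{cor:main}, since every copy of $K_{2,2,t}^{(3)}$ contains a box, and the upper bound is Erd\H{o}s's $O_t(n^{11/4})$. In your optional rederivation of the upper bound, Jensen is applied to $\sum_{(y,z)}\binom{d(y,z)}{2}$ over the $n^2$ pairs $(y,z)$ (this sum double-counts the edges of all the common-neighbourhood graphs), whereas the $\binom{n}{2}$ pairs of $x$'s enter only when you sum the K\H{o}v\'ari--S\'os--Tur\'an bound.
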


Bukh's random algebraic method~\cite{bukh} produces $K_{2,2,t}^{(3)}$-free triple systems with $\Theta(n^{11/4})$ edges when $t \geq 721$, and Mubayi~\cite{mubayi} reached $t = 7$ at the cost of an $n^{-o(1)}$ factor. Construction~\ref{const:Hq} gives the matching exponent $\Theta(n^{11/4})$ for every $t \geq 2$, with no lower-order loss.

\subsection{The octahedron problem for graphs}

For graphs, the octahedron problem asks for the maximum number of triangles in an $n$-vertex graph containing no octahedron $K_{2,2,2}$. In the generalized Tur\'an notation of Alon and Shikhelman~\cite{alonshikhelman}, this is $\operatorname{ex}(n, K_3, K_{2,2,2})$.

The triangles of a graph $G$ form a $3$-uniform hypergraph $T(G)$ on the same vertex set, and an octahedron in $G$ yields a copy of $K_{2,2,2}^{(3)}$ in $T(G)$. In this graph setting, edge dependencies limit the number of triangles: Calbet and Goenka~\cite{calbetgoenka} proved that every $K_{1,2,2}$-free graph (and hence every octahedron-free graph) on $n$ vertices contains at most
\[
O\biggl(\frac{n^{5/2}}{\sqrt{\log n}}\biggr)
\]
triangles, in contrast to $\Theta(n^{11/4})$ for general box-free $3$-uniform hypergraphs.

\subsection{Open questions}

We record several questions suggested by this work.

\begin{enumerate}
\item \textit{Complete hypergraphs $K_{s,s,s}^{(3)}$ with $s \geq 3$.}
Erd\H{o}s~\cite{erdos} proved $z(n, K_{s,s,s}^{(3)}) = O(n^{3 - 1/s^2})$, which gives $11/4$ when $s = 2$. For $s = 3$, the upper bound is $O(n^{26/9}) \approx O(n^{2.889})$, while the best known lower bound is $\Omega(n^{14/5}) = \Omega(n^{2.800})$, due to Conlon, Pohoata, and Zakharov~\cite{cpz}. Can the finite-difference line partition approach be generalized to higher-degree extensions for $s \geq 3$?

\item \textit{Higher uniformity.}
The $r$-dimensional Erd\H{o}s box problem asks for the maximum number of edges in an $r$-partite $r$-uniform hypergraph with $n$ vertices in each part containing no copy of $K_{2,2,\dots,2}^{(r)}$. Erd\H{o}s~\cite{erdos} proved the upper bound $O(n^{r - 1/2^{r-1}})$ and conjectured it to be tight for all $r \geq 2$. For $r = 2$ this exponent is $3/2$ (the K\H{o}v\'ari--S\'os--Tur\'an theorem for $C_4$), and for $r = 3$ it is $11/4$. Does an analogue of the finite-difference line partition exist for $r \geq 4$?

\item \textit{Odd characteristic.}
Our construction relies on characteristic~$2$, where the fibers of $\delta$ are affine lines and quadratic equations become linear (Remark~\ref{rem:char2}). Does an analogous construction exist in odd characteristic?
\end{enumerate}

\end{document}